%%%%%%%%%%%%%%%%%%%%%%%%%%%%%%%%%%%%%%%%%%%%%%%%%%%%%%%%%%%%%%%%%%%%%%%%%%%%%%%%
%\documentclass[twocolumn]{IEEEtran}  % Comment this line out
% if you need a4paper
\documentclass[letter,10pt,conference]{ieeeconf} 
\IEEEoverridecommandlockouts                              % This command is only
% needed if you want to
% use the \thanks command
\overrideIEEEmargins

% See the \addtolength command later in the file to balance the column lengths
% on the last page of the document

% The following packages can be found on http:\\www.ctan.org
%\usepackage{graphics} % for pdf, bitmapped graphics files
%\usepackage{epsfig} % for postscript graphics files
%\usepackage{mathptmx} % assumes new font selection scheme installed
%\usepackage{times} % assumes new font selection scheme installed

\pdfobjcompresslevel=0
\usepackage[utf8]{inputenc}
\usepackage[tbtags]{amsmath}
\usepackage{subfigure}
\usepackage{bbm}
\usepackage[subnum]{cases}
\usepackage{amssymb, psfrag, epstopdf}

\usepackage{amsthm,balance}
\usepackage{enumerate}
\usepackage{graphicx}
\usepackage{color,cite,mathtools,booktabs}
\usepackage{cleveref}
%\crefname{section}{§}{§§}
%\Crefname{section}{§}{§§}
\usepackage{algorithm,algpseudocode}

\newtheorem{lemma}{Lemma}

\newtheorem{assumption}{Assumption}

\DeclareMathOperator{\diag}{dg}
\DeclareMathOperator{\rank}{rank}
\DeclareMathOperator{\trace}{Tr}

\newcommand \mcC{\mathcal{C}}
\newcommand \mcE{\mathcal{E}}

\newcommand \mcG{\mathcal{G}}
\newcommand \hmcG{\hat{\mathcal{G}}}
\newcommand \uX{\underline{X}}
\newcommand \oX{\overline{X}}
\newcommand \mcV{\mathcal{V}}
\newcommand \hmcE{\hat{\mathcal{E}}}

\newcommand \mcZ{\mathcal{Z}}
\newcommand \tL{\tilde{L}}
\newcommand \tW{\tilde{W}}

% \title{Optimal Topology Design for Minimizing Disturbances in Power Networks}
\title{Designing Power Grid Topologies for Minimizing Network Disturbances:\\
	An Exact MILP Formulation}
%\title{Optimal Edge Augmentation for Minimizing Disturbance Propagation in Power Networks: A MILP-based approach}
\author{Siddharth Bhela, Deepjyoti Deka, Harsha Nagarajan, Vassilis Kekatos} 
	
	%\thanks{S. Bhela and V. Kekatos are with the Bradley Dept. of ECE, Virginia Tech, VA, USA. Emails: {\tt\footnotesize (sidbhela,kekatos)@vt.edu}. D. Deka and H. Nagarajan are with the Theoretical Division at Los Alamos National Laboratory, NM, USA. Emails: {\tt\footnotesize (deepjyoti,harsha)@lanl.gov}. The work was supported by LDRD-ER (POD) and from the Advanced Grid Modeling Program in the Office of Electricity in U.S. Dept. of Energy.}}
% \date{}

\begin{document}
	
	\maketitle
	\thispagestyle{empty}
	\pagestyle{empty}

	%%%%%%%%%%%%%%%%%%%%%%%%%%%%%%%%%%%%%%%%%%%%%%%%%%%%%%%%%%%%%%%%%%%%%%%%%%%%%%%%
	\begin{abstract}
		The dynamic response of power grids to small transient events or persistent stochastic disturbances influences their stable operation. This paper studies the effect of topology on the linear time-invariant dynamics of power networks. For a variety of stability metrics, a unified framework based on the $H_2$-norm of the system is presented. The proposed framework assesses the robustness of power grids to small disturbances and is used to study the optimal placement of new lines on existing networks as well as the design of radial (tree) and meshed (loopy) topologies for new networks. Although the design task can be posed as a mixed-integer semidefinite program (MI-SDP), its performance does not scale well with network size. Using McCormick relaxation, the topology design problem can be reformulated as a mixed-integer linear program (MILP). To improve the computation time, graphical properties are exploited to provide tighter bounds on the continuous optimization variables. Numerical tests on the IEEE 39-bus feeder demonstrate the efficacy of the optimal topology in minimizing disturbances.
	\end{abstract}
	
	\section{Introduction}
	\label{sec:introduction}
	%Power system stability is the ability of the system to maintain synchronism when subjected to small disturbances. Synchronism, in this context, means that the frequencies across the network are equal ($50$ or $60$ Hz) and the phase angle differences are at an equilibrium point corresponding to balanced power flows. 
	\allowdisplaybreaks
	The electric power system is continually changing. It is expected that the grid of the future will have higher variability due to renewables, changing load patterns, and distributed energy sources~\cite{gayme}. This paradigm shift will pose an enormous challenge for design and stable operation of power networks. The inherent uncertainty associated with renewable energy sources and active loads is likely to produce more frequent and higher amplitude disturbances \cite{gayme}. In addition, owing to the lower aggregate inertia of systems with high penetration of renewables, the capability of power networks to handle such disturbances may be significantly reduced \cite{ulbig2014impact}.
	
	Thus, improving the dynamic performance of the power grid is of importance and has received greater attention from academia and industry. Efforts in this direction include development of payment structures and novel markets, as well as analysis of techniques to incentivize load-side participation \cite{seanmeyn}, \cite{Trudnowski06}, \cite{ferc755}, \cite{ercotwhitepaper}. The benefits of load-side controllers has motivated a series of recent works to understand how different system parameters and controller designs impact the transient response of the network \cite{mallada16}, \cite{GuoLow18}. Recent work in \cite{Poolla17} has also explored the optimal placement of virtual inertia to improve stability.
	
	%It is well known that the topology of the grid and its line impedances influence the flow of disturbances in the network and its stability.
	Compared to the previously mentioned system parameters, the effect of network topology on transient stability is less well understood. Without detailed simulations, it is usually hard to infer how a change in network topology influences the overall grid behavior and performance. Recent work in \cite{GuoLow18} shows that the impact of network topology on the power system can be quantified through the network Laplacian matrix eigenvalues. In addition, grid robustness against low frequency disturbances is mostly determined by network connectivity \cite{GuoLow18}, further motivating this study. Past studies in the power and control systems communities have also looked at designing network topologies for specific goals using system theoretic tools. Such goals include reduction of transient line losses \cite{gayme}, improvement in feedback control \cite{john}, \cite{mallada}, coherence based network design \cite{SDPvoltage} and augmentation \cite{lygeros}. Semidefinite programming (SDP) based tools have also been utilized to design and augment network topologies for dynamic control~\cite{arpita}, \cite{SDPvoltage}, \cite{nagarajan2017optimal}.
	
	While the primary focus here is topology design, we recognize that there is line of related work dealing with learning network topologies and line parameters \cite{GVS18}, \cite{Guido2018}, \cite{dekatcns}. Schemes that rely on passive data have been used in \cite{dekatcns} and \cite{GVS18} for learning radial topologies. Different from them, the work in \cite{Guido2018} actively probes the grid to recover radial topologies and verify line statuses.

	In this paper we are interested in studying the effect of topology on the power grid dynamics. For a variety of objective functions, such as line loss reduction, fast damping of oscillations, and network coherence, our previous work \cite{Deka18ACC} presented a unified framework to study topology design based on the $\mathcal{H}_2$-norm. In \cite{Deka18ACC}, the focus was on topology reconfiguration rather then topology design. Further, the work in \cite{Deka18ACC} developed suboptimal algorithms, albeit with guarantees on optimality gap, to tackle the combinatorial design problems involved. Here, we present reformulations of the topology design task that allow us to solve the problem to optimality.
	%\cmr{The $\mathcal{H}_2$-norm \cite{kundur} is commonly used as a performance metric to study the stability of linear dynamical systems. Under general cost functions, we show that the optimal topology design task can be formulated as a non-linear mixed-integer semidefinite program (MISDP). Although there are no off the shelf solvers for non-linear MISDP's, the topology design problem can be reformulated as a 'lifted MISDP' in tractable form under mild assumptions and further relaxed to a mixed-integer linear program (MILP). The reformulation is valid for both radial (tree) and meshed (loopy) networks. Compared to existing works \cite{gayme, john, SDPvoltage} that use continuous valued relaxations, our formulation represents edge augmentation as a discrete option. }
	%For grids with loops, we use discrete option augmentation schemes to design the topology of the grid from a constructed tree. 
	
	%The performance of the MILP for physically relevant control goals is tested on power grid networks. In particular, we show that bound tightening procedures based on graph theoretic properties significantly improve the computation time. To demonstrate the efficacy of our framework in designing topologies that minimize disturbances, we compare the frequency response of (sub)optimal topologies to a impulse input. To summarize, we develop a theoretical framework for topology configuration problems for power grids. 
	
	Our contributions are as follows. First, we provide a comprehensive modeling and analysis framework for the topology design problem to optimize a $\mathcal{H}_{2}$ norm based performance metric subject to budget constraints in Section~\ref{sec:formulation}. Second, in Section~\ref{sec:top} we show that although the topology design task is inherently non-convex, it is possible to exactly reformulate the problem in tractable form using McCormick relaxation (or linearization). This can then be used with off-the-shelf solvers to determine the optimal solution. Further, we show that exploiting graph-theoretic properties to tighten bounds on the continuous optimization variables yields significant improvements in computation time. Section~\ref{sec:nx} discusses numerical tests based on the IEEE $39-$bus test case followed by conclusions and future directions in Section~\ref{sec:conc}.

	\textbf{Notation:} Column vectors (matrices) are denoted by lower- (upper-) case letters and sets by calligraphic symbols, unless noted otherwise. The cardinality of set $\mathcal{X}$ is denoted by $|\mathcal{X}|$. Given a real-valued sequence $\{x_1, x_2, \ldots, x_N\}$, $x \in \mathbb{R}^N$ is the vector obtained by stacking the scalars $x_i$ and $\diag(\{x_i\})$ is the corresponding diagonal matrix. The operator $(\cdot)^{\top}$ stands for transposition. The $N$-dimensional all ones vector is denoted by $1_N$; $I_{N}$ is the $N \times N$ identity matrix; and $e_i$ is the canonical vector with a $1$ at the $i$-th entry and zero everywhere else. The notation $\dot{\theta_i}$ denotes its time derivative $\frac{\delta \theta_i}{\delta t}$.
	
	\section{Grid Modeling and Network Coherence}
	\label{sec:formulation}
	This section presents a linearized power system model, defines network coherence metrics, and reviews their connection to studying the stability of a linear dynamical system.

	\subsection{Dynamic Power System Model}\label{subsec:dynamic}
	A power network having $N+1$ buses can be modeled as a connected graph ${\cal G }= ({\cal V}, \cal{E})$,  whose nodes ${\cal V}:= \{0,1,\ldots,N\}$ correspond to buses, and edges ${\mcE} \subseteq \cal V \times \cal V$ to undirected lines. Bus $i=0$ is selected as the reference; all other buses constitute set $\mathcal{V}_r:=\mathcal{V}\backslash\{0\}$. Let $b_{ij}>0$ be the susceptance of line $(i,j)\in \mcE$ connecting nodes $i$ and $j\in \cal V$. Then, the susceptance Laplacian matrix $L \in \mathbb{R}^{(N+1) \times (N+1)}$ associated with the grid graph $\cal G$ can be defined as
	\begin{align*}
		L_{ij}: = \begin{cases}-b_{ij} &, ~\text{if $(i,j) \in {\mcE}$}\\
			\sum_{(i,j)\in {\mcE}}b_{ij} &,~\text{if $j=i$}\\
			0 &,~\text{otherwise}. \end{cases} 
	\end{align*}
	
	Each node $i \in \mathcal{V}$ is associated with a phase angle $\theta_i$, frequency $\omega_i= \dot{\theta_i}$, inertia constant $M_i$, and damping coefficient~$D_i$; see~\cite{kundur}. Since bus $i$ may host an ensemble of devices such as synchronous machines, renewable or energy storage sources, frequency-dependent or actively controlled frequency-responsive loads, the parameters $M_i$ and $D_i$ characterize their aggregate behavior \cite{Poolla17}. 
	
	Focusing on small-signal stability, the quantities $(\theta_i,\omega_i)$ will henceforth refer to the deviations of nodal voltage angles and frequencies from their nominal values. The grid dynamics at bus $i$ can then be described by the linearized swing equation \cite{kundur}
	\begin{equation}
		M_i\dot{\omega}_i + D_i\omega_i= P^m_i-P_{i}^e  +  u_i \label{swing}
	\end{equation}    
	where $P^m_i$ denotes the mechanical power input; $P_i^e$ is the electric power flowing from bus $i$ to the grid; and $u_i$ is the power consumed at bus $i$. Again, the aforementioned quantities refer to the deviations from their scheduled values. Under the linearized DC model, the power flowing from bus $i$ to the grid can be approximated as \cite{kundur}
	\begin{equation}
		P_{i}^e  \simeq \sum_{(i,j)\in {\mcE}}b_{ij}(\theta_i-\theta_j). \label{DCpowerflow}
	\end{equation} 
	%In the absence of external disturbances, the system of equations in \eqref{swing}-\eqref{DCpowerflow} has a stable operating point ($\omega_i =\omega^0, \dot{\omega}_i= 0$). This operating point can be used to express the power grid dynamics in terms of deviations from the reference. Through slight abuse of notation, from this point on $(\omega_i, \theta_i, u_{i})$ will refer to deviations of quantities from their nominal values. 
	
	Combining \eqref{swing} and \eqref{DCpowerflow}, the state-space representation of the power grid is
	\begin{align} \label{swingmatrixfull}
		\begin{bmatrix} \dot{{\theta}} \\ \dot{{\omega}}  \end{bmatrix}   = \underbrace{\begin{bmatrix} 0 & {I}_N\\ -{M}^{-1}L  & -{M}^{-1}{D} \end{bmatrix}}_{A:=} \begin{bmatrix} {\theta} \\ {\omega}  \end{bmatrix} + \underbrace{\begin{bmatrix} 0\\ {M}^{-1}\end{bmatrix}}_{B:=} {u}
	\end{align}
	where ${M}:=\diag(\{M_i\})$ and ${D}:=\diag(\{D_i\})$ are diagonal matrices containing the inertia and damping coefficients; the states ${\omega} \in \mathbb{R}^{N+1}$ and ${\theta} \in \mathbb{R}^{N+1}$ are accordingly the stacked vectors of nodal frequencies and angles; and ${u} \in \mathbb{R}^{N+1}$ is the vector of local power disturbances. The subsequent analysis relies on the ensuing assumption.
	
	\begin{assumption}\label{as:A1}
		The inertia coefficients are strictly positive and damping coefficients are identical for all buses, that is $M_i > 0$ and $D_i = d$ for all $i\in \cal V$.
	\end{assumption}
	
	The non-zero inertia assumption is not necessary, but simplifies our presentation. If $M_i=0$ for a bus $i\in \cal V$, then a Kron-reduced network containing only nodes with inertia can be obtained. The second assumption of constant damping has been previously used in~\cite{gayme}, \cite{john}. When it is not satisfied, the stability metric defined in the next section does not enjoy a closed-form expression, and only bounds can be derived. In our future work, we plan to extend our approach to the case with variable $D$. 
	
	\subsection{Generalized Network Coherence Metrics}
	Given the state-space model in \eqref{swingmatrixfull}, our goal is to design network topologies or augment existing ones to minimize the voltage angle deviations caused by load disturbances. These angle deviations are formally captured by the metric of \emph{network coherence}~\cite{bamieh12}. The latter is defined as $\lim_{t\rightarrow \infty}\mathbb{E}[f_c(t)]$ where $f_c(t)$ is the steady-state deviation of the angles from their grid-average
	\begin{equation} \label{coherence}
		f_c(t):=\sum_{i=1}^{N}\left(\theta_i(t) - \frac{1}{N}\sum_{j=1}^{N} \theta_j(t) \right)^2.
	\end{equation}
	In other words, network coherence quantifies how tightly bus angles drift together. Larger variances in angle deviations reflect a more disordered network~\cite{gayme}. 
	
	%Note that the topology $L$ appears in the swing equation and affects the evolution of disturbances. Recent works in \cite{} and \cite{} have shown that $i)$ the network coherency is weakly dependent on the topology $ii)$ low frequency disturbances are mostly suppressed by the network topology. Hence, our goal is to design topologies that improve stability by minimizing a generalized network coherence performance metric that includes frequency excursions as well.
	
	Instead of network coherence, the system operator may be interested in minimizing the expected steady-state value of a generalized function combining both voltage angle and frequency excursions~\cite{Deka18ACC}, \cite{Poolla17}
	\begin{equation}\label{loss}
		f(t) := \sum_{\forall i \neq j} w_{ij}(\theta_i(t)-\theta_j(t))^2 + \sum_{i \in \mathcal{V}}s_{i}\omega_i^2(t)
	\end{equation}
	where $w_{ij}$ and $s_i$ are given non-negative scalars. The weights $w_{ij}$ induce a connected weighted graph $\mcG_w$ that is not necessarily identical to $\cal G$; see Fig.~\ref{fig:concept}. Let $W$ be the Laplacian matrix of graph $\mcG_w$ and $S:=\diag(\{s_i\})$. Then, it is not hard to verify that $f(t) = \|y(t)\|_2^2$ where
	\begin{align} \label{output}
		y(t) :=  \underbrace{\begin{bmatrix}W^{1/2}  ~&0 \\0 ~&S^{1/2}\end{bmatrix}}_{C:=}  \begin{bmatrix}\theta(t)\\ \omega(t)\end{bmatrix}.
	\end{align}
	
	\begin{figure}[t]
		\centering
		\includegraphics[scale=0.8]{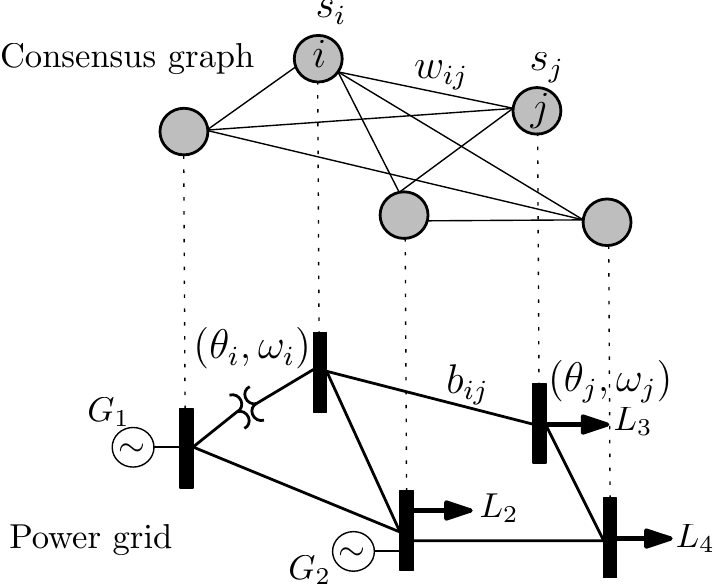}
		\caption{Illustration of power grid and its associated coherence graph.\cite{Deka18ACC}}
		\label{fig:concept}
	\end{figure}
	
	Being a Laplacian matrix, matrix $W$ is positive semidefinite, and so its matrix square root $W^{1/2}$ is well-defined. The matrix $S^{1/2}$ is well-defined too. The importance of the generalized coherence metric of \eqref{loss} is that for different choices of~$(W,S)$, one can represent different grid performance metrics and study them in a unified manner~\cite{Deka18ACC}: For instance, if the system operator is only interested in minimizing frequency excursions, one can set $W=0$ and $S=I_{N+1}$. Similarly, if the goal is to reduce transient line losses, one can choose $W=L$ and $S=0$. Lastly, the network coherence metric of \eqref{coherence} corresponds to the case of $W = I_{N+1}- \frac{1}{N+1}1_{N+1}1_{N+1}^{\top}$ and $S =0$. Figure~\ref{fig:concept} shows the physical power network and its associated coherence graph. Note that the choice of $(W,S)$ for network coherence penalizes global deviations, whereas that for line loss reduction penalizes local deviations. 
	
	%Further, choosing $W=L$ and $S=M$ accounts for the potential and kinetic energy in swing mode oscillations \cite{Poolla17}. Hence, the metric in \eqref{loss} can be interpreted as the generalized energy in synchronous generators.
	
	\subsection{Relation to Stability Analysis}
	The expected steady-state value of $f(t)$ can be interpreted as the squared ${\cal H}_2$-norm of the linear time-invariant (LTI) system described by \eqref{swingmatrixfull} and \eqref{output}. This system will be compactly denoted by $H:=(A,B,C)$. Leveraging this link, the generalized network coherence is amenable to a closed-form expression~\cite{Deka18ACC}. 
	
	The ${\cal H}_2$ norm is widely used as a stability performance metric and has several interpretations~\cite{boyd1991linear}: For unit-variance stochastic white noise $u(t)$, the ${\cal H}_2$-norm of an LTI system equals the steady-state output variance~\cite[Ch.~5]{boyd1991linear}, \cite{gayme}, \cite{Poolla17}
	\begin{equation} \label{h3}
		\|H\|^2_{{\cal H}_2}:=\lim_{t\rightarrow \infty}\mathbb{E}\left[\|y(t)\|_2^2\right].
	\end{equation}
	For unit-impulse disturbances $u_i(t) =e_i\delta(t)$ for $i=1,\ldots,N$, the ${\cal H}_2$-norm can be equivalently written as
	\begin{equation} \label{h2}
		\|H\|^2_{{\cal H}_2}:=\sum_{i=1}^{N}\int_{0}^{\infty} \|y_i(t)\|_2^2 dt
	\end{equation}
	where $y_i(t)$ is the system output corresponding to disturbance vector $u_i(t)$. Instead of evaluating the expectation in~\eqref{h3} or the time integral in \eqref{h2}, the $\mathcal{H}_2$-norm for system $H$ can be expressed as~\cite[Ch.~5]{boyd1991linear} 
	\begin{equation} \label{obsh2}
		\|H\|^2_{{\cal H}_2} = \trace(B^{\top}QB)
	\end{equation}
	where $Q:=\int_{0}^{\infty}e^{A^{\top}t}C^{\top}Ce^{At} dt$ is the observability Gramian matrix of the LTI system $H$. In fact, the matrix $Q$ can be computed as the solution to the linear Lyapunov equation~\cite[Ch.~5]{boyd1991linear} 
	\begin{equation}\label{observability}
		A^{\top}Q+QA = -C^{\top}C. 
	\end{equation}
	Matrix $Q$ is known to be symmetric positive semidefinite, so it can be partitioned as
	\begin{equation} \label{Q}
		Q=
		\begin{bmatrix}
			Q_1 & Q_0 \\
			Q_0^{\top} & Q_2
		\end{bmatrix}.
	\end{equation}
	Based on the reformulation in~\eqref{obsh2}, we next design topologies attaining minimum generalized network coherence.
	
	%%%%%%%%%%%%%%%%%%%%%%%%%%%%%%%%%%%%%%%%%%%%%%%%%%%%%%%%
	\section{Optimal Grid Topology Design} \label{sec:top}
	Among other criteria, the topology of a grid can be designed to minimize the generalized network coherence metric of \eqref{loss}. Given a graph $\hat{\mathcal{G}}=(\mcV, \hat{\mcE})$, where $\hat{\mathcal{E}}$ is the set of candidate lines weighted by their susceptances, the goal is to find the subset ${\mcE} \subseteq \hmcE$ of cardinality $|\mathcal{E}|\leq K$ with $K\geq N$ attaining the minimum generalized network coherence. Given the equivalences of the previous section, this task can be posed as the optimization problem
	\begin{subequations}\label{eq:P1}
		\begin{align}
			\arg \min_{\mathcal{E} \in \hat{\mathcal{E}}}~&~\trace(B^{\top}QB)\\
			\mathrm{s.to}~ &~|{\mcE}| \leq K\label{eq:P1:cardinality}\\
			&~Q \text{~satisfies}~\eqref{observability}\\
			&~{\mcE} \text{~is connected}.
		\end{align}
	\end{subequations}
	The constraint in \eqref{eq:P1:cardinality} reflects the budget on the number of edges. For $K=N$, the problem in \eqref{eq:P1} yields a tree topology, which is important for typically radially operated distribution grids. Interestingly, leveraging the problem structure under Assumption~\ref{as:A1}, it can be shown that the objective of \eqref{eq:P1} simplifies as~\cite{Poolla17}, \cite{Deka18ACC}, \cite{gayme} 
	\begin{equation}\label{eq:cost2}
		\trace(B^{\top}QB)=\frac{\trace(WL^+) + \trace(SM^{-1})}{2d}.
	\end{equation}
	If Assumption~\ref{as:A1} is not met, i.e., the damping coefficients are not identical $(D\neq dI_N)$, then it may not be possible to find a closed-form expression for the objective in \eqref{eq:P1}. If the damping coefficients are known to lie within $[d_{\min},d_{\max}]$, then the objective of \eqref{eq:P1} can be bounded as $\frac{\trace(WL^+) + \trace(SM^{-1})}{2d_{\max}}\leq \trace(B^{\top}QB)\leq \frac{\trace(WL^+) + \trace(SM^{-1})}{2d_{\min}}$; see~\cite{gayme}, \cite{Poolla17}. Therefore, as the range $[d_{\min},d_{\max}]$ becomes narrower, minimizing the numerator of \eqref{eq:cost2} approaches the optimal solution to \eqref{eq:P1}. From Assumption~\ref{as:A1}, we consider $d_{\min}=d_{\max}$ here.
	
	The second summand in the right-hand side of \eqref{eq:cost2} is independent of the grid topology, and can thus be ignored. Problem~\eqref{eq:P1} can then be reformulated as
	\begin{align}\label{eq:P2}
		\arg\min_{{\mcE} \in \hmcE} ~&~\trace(WL^+)\\
		\mathrm{s.to}~ &~ |{\mcE}| \leq K \nonumber\\
		~&~\rank(L) = N \nonumber
	\end{align}
	where the rank constraint ensures that the graph induced by $\mathcal{E}$ is connected. Problem~\eqref{eq:P2} could be tackled with brute-force algorithms over all the possible topologies of budget $K$ and below; though that would incur exponential complexity. 
	
	The objective in \eqref{eq:P2} can be written in terms of the inverse of the \emph{reduced Laplacian} matrix of $\mcG$ as explained next. The claim has appeared in \cite[Lemma~3.2]{gayme}, albeit for the restricted case where $W$ and $L$ have the same structure.
	\begin{lemma}\label{le:cost}
		If $\tW$ and $\tL$ are the $N\times N$ matrices obtained after removing the first row and column from $W$ and $L$, respectively, then
		\[\trace(WL^+)=\trace(\tW\tL^{-1}).\]
	\end{lemma}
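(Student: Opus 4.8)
The plan is to relate the Moore--Penrose pseudoinverse $L^+$ to the ``grounded'' inverse obtained by fixing node $0$ as reference. Since $L$ is the Laplacian of a connected graph, its null space is exactly $\mathrm{span}(1_{N+1})$, so $L^+$ shares the range and null space of $L$; in particular $L^+1_{N+1}=0$ and $LL^+=L^+L=P$, where $P:=I_{N+1}-\tfrac{1}{N+1}1_{N+1}1_{N+1}^{\top}$ is the orthogonal projector onto $1_{N+1}^{\perp}$. Because $W$ is likewise a Laplacian, $W1_{N+1}=0$, and hence $PW=WP=PWP=W$. This annihilation property is what will let me insert projectors freely inside the trace.

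Next I would introduce the grounded inverse $G:=\left[\begin{smallmatrix}0 & 0\\ 0 & \tL^{-1}\end{smallmatrix}\right]$, which is well defined since the constraint $\rank(L)=N$ forces the reduced Laplacian $\tL$ to be invertible. Writing the first column of $L$ as $(L_{00},l_0^{\top})^{\top}$ with $l_0\in\mathbb{R}^N$, a single block multiplication shows $WG$ has lower-right block $\tW\tL^{-1}$ and zero leading block, so $\trace(WG)=\trace(\tW\tL^{-1})$. It therefore remains to prove the identity $\trace(WL^+)=\trace(WG)$.

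The crux is to establish that $L^+=PGP$. Both matrices are symmetric with range contained in $1_{N+1}^{\perp}$, and since $L$ restricts to an invertible map on $1_{N+1}^{\perp}$, the equation $LX=P$ has a unique solution among symmetric matrices with range in $1_{N+1}^{\perp}$, namely $X=L^+$. Hence it suffices to verify $L(PGP)=P$; using $LP=L$ this collapses to $LGP=P$, and substituting the block forms of $L$ and $G$ together with the Laplacian identity $l_0=-\tL1_N$ (read off from $L1_{N+1}=0$) reduces each block of $LGP$ to the matching block of $P$. I expect this block-by-block matching to be the main technical step, since it is the only place where the defining relation $L1_{N+1}=0$ (equivalently $l_{00}=1_N^{\top}\tL 1_N$) is actually consumed.

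Finally, with $L^+=PGP$ and $W=PWP$ in hand, cyclic invariance of the trace yields
\[
\trace(WL^+)=\trace(W\,PGP)=\trace(PWP\,G)=\trace(WG)=\trace(\tW\tL^{-1}),
\]
which is the claim. Connectivity enters only through guaranteeing both that $\tL^{-1}$ exists and that $\ker L=\mathrm{span}(1_{N+1})$, so no structural compatibility between $W$ and $L$ is needed, which is exactly the generalization over \cite[Lemma~3.2]{gayme}.
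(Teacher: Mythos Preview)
Your argument is correct and is essentially the paper's proof in projector language: the paper writes $L^{+}=RJ\tL^{-1}JR^{\top}$ with $R:=[1_N~{-I_N}]^{\top}$ and $J:=(R^{\top}R)^{-1}$, which is exactly your identity $L^{+}=PGP$ since $RJR^{\top}=P$ and $R^{\top}\!\left[\begin{smallmatrix}0\\-I_N\end{smallmatrix}\right]=I_N$. Both proofs then finish by combining this expression for $L^{+}$ with the Laplacian property $W1_{N+1}=0$ (the paper via $W=R\tW R^{\top}$, you via $PWP=W$) and the cyclic property of the trace.
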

	
	\begin{proof}
		The Laplacian matrices can be described in terms of their reduced counterparts as
		\begin{equation}\label{eq:reduced}
			W=R\tW R^\top \quad \text{and}\quad L=R\tL R^\top
		\end{equation}
		where $R:=[1_N~-I_N]^\top$. This is because a Laplacian matrix satisfies $L1_{N+1}=0_{N+1}$. If we define matrix $J:=I_N-\frac{1}{N+1}1_N1_N^\top$, then it is not hard to see that
		\begin{equation}\label{eq:RR}
			R^\top R=I_N+1_N1_N^\top=J^{-1}.
		\end{equation}
		Then the pseudoinverse of $L$ can be expressed as
		\begin{equation}\label{eq:pseudo}
			L^+=RJ\tL^{-1} J R^\top.
		\end{equation}
		The latter can be shown by simply verifying that $LL^+L=L$ and $L^+LL^+=L^+$. From \eqref{eq:reduced} and \eqref{eq:pseudo}, we get that
		\[\trace(WL^+)=\trace\left( R\tW R^\top R J \tL^{-1} J R^\top \right)=\trace(\tW\tL^{-1})\]
		where the last equality follows from \eqref{eq:RR} and the cyclic property of the trace.
	\end{proof}
	
	To express the optimization in \eqref{eq:P2} over $\hmcE$ in a more convenient form, let us introduce a binary variable $z_m$ for every line $m\in\hmcE$. This variable is $z_m=1$ if line $m$ is selected, i.e., $m\in\mcE$; and $z_m=0$, otherwise. If we stack variables $\{z_m\}_{m\in\hmcE}$ in vector $z$, then $z$ has to lie in the set
	\begin{equation}\label{eq:Z}
		\mcZ:=\left\{z:z^{\top}{1}_{|\hat{\mathcal{E}}|} \leq K, \ z \in \{0,1\}^{|\hmcE|}\right\}.
	\end{equation}
	
	Based on the line selection vector $z$, the reduced susceptance Laplacian of $\mcG$ can be expressed as
	\begin{equation}\label{eq:L2}
		\tL(z)=\sum_{(i,j)\in\hmcE} z_{ij}b_{ij} a_{ij} a_{ij}^\top
	\end{equation}
	where each vector $a_{ij}$ corresponds to line $(i,j)\in\hmcE$, and its $n$-th entry is defined as
	\begin{equation*}
		[a_{ij}]_n: = \begin{cases}
			+1 &,\text{ if }n=i\\
			-1 &,\text{ if }n=j\\
			0 &,\text{ otherwise.}\\
		\end{cases} 
	\end{equation*}
	
	Given Lemma~\ref{le:cost} and \eqref{eq:L2}, the optimization in \eqref{eq:P2} can be equivalently written as the mixed-integer semidefinite program (MI-SDP)
	\begin{subequations} \label{eq:P3}
		\begin{align}
			\arg\min_{X,z\in \mcZ} ~&~\trace(\tW X)\label{eq:P3:cost}\\
			\mathrm{s.to}~&~\begin{bmatrix}
				X & I_N \\
				I_N & \tL(z)
			\end{bmatrix}\succeq 0.\label{eq:P3:con}
		\end{align}
	\end{subequations}
	To see this, the constraint in \eqref{eq:P3:con} is equivalent to $X\succeq 0$ and $X\succeq \tL^{-1}(z)$; see~\cite[Sec.~A.5.5]{BoVa04}. Since $\tW\succ 0$, the optimal $X$ can be shown to be $X=\tL^{-1}(z)$. In fact, constraint \eqref{eq:P3:con} waives the possibility of the optimal $\tL$ being singular, and thus, ensures the connectedness of the graph. 
	
	To overcome the computational complexity of the MI-SDP in \eqref{eq:P3}, one may relax the binary variables to box constraints as $z\in [0,1]^{|\hmcE|}$ to get an ordinary SDP, which can be handled by off-the-shelf solvers for moderately-sized networks. Being a relaxation, the SDP solution provides a lower bound on the cost of \eqref{eq:P3}. If the obtained solution of the SDP turns out to be binary, then this $z$ minimizes the MI-SDP in \eqref{eq:P3} as well. Otherwise, (randomized) rounding heuristics can be adopted to acquire a feasible $z$. %\cmr{The tests of Section~\ref{sec:nx} demonstrate that the $z$ minimizing the SDP relaxation was never binary.}
	
	Aiming at an exact solver, we will next show how the MI-SDP of \eqref{eq:P3} can be equivalently formulated as an MILP. To this end, we first rewrite \eqref{eq:P3} as
	\begin{subequations} \label{eq:P4}
		\begin{align}
			(X^*,z^*)\in\arg\min_{X,z\in \mcZ} ~&~\trace(\tW X)\label{eq:P4:cost}\\
			\mathrm{s.to}~ &~\tL(z)X = I_N.\label{eq:P4:con}
		\end{align}
	\end{subequations}
	Note that for constraint \eqref{eq:P4:con} to hold, $\tL(z^*)$ must be non-singular and $X^*=\tL^{-1}(z^*)$. Although its cost is linear, problem \eqref{eq:P4} is non-convex due to the bilinear constraints in \eqref{eq:P4:con} and because vector $z$ is binary. To handle the former, we adopt the McCormick relaxation technique~\cite{mccormick1976computability}, which is briefly reviewed next.
	
	Constraint~\eqref{eq:P4:con} involves the bilinear terms $z_{m}X_{ij}$ for $m\in\hmcE$ and $i,j\in\mcV_r$. For each such term, introduce a new variable 
	\begin{equation} \label{bilinear}
		y_{mij}=z_{m}X_{ij}.
	\end{equation}
	Suppose that the entries $X^*_{ij}$ are known to lie within the range $[\uX_{ij},\oX_{ij}]$. Since $z_m\in [0,1]$, the ensuing inequalities hold trivially~\cite{mccormick1976computability} 
	\begin{subequations} \label{mcm1}
		\begin{align}
			& z_{m}(X_{ij} - \uX_{ij})  \geq 0\\
			& (z_{m}-1)(X_{ij} - \oX_{ij}) \geq 0 \\
			& z_{m}(X_{ij} - \oX_{ij}) \leq 0 \\
			& (z_{m}-1)(X_{ij} - \uX_{ij}) \leq 0.  
		\end{align}
	\end{subequations}
	Substituting $z_{m}X_{ij}$ by $y_{mij}$ in \eqref{mcm1} provides
	\begin{subequations}\label{mccormick}
		\begin{align}
			& y_{mij} \geq z_{m}\uX_{ij}\label{mccormick:a}\\ 
			& y_{mij} \geq {X}_{ij} + z_{m}\oX_{ij} -\oX_{ij}\label{mccormick:b}\\ 
			& y_{mij} \leq z_{m}\oX_{ij}\label{mccormick:c}\\ 
			& y_{mij} \leq {X}_{ij} + z_{m}\uX_{ij} -\uX_{ij}.\label{mccormick:d}
		\end{align}
	\end{subequations}
	One can replace the bilinear terms in \eqref{eq:P4:con} by $y_{mij}$'s and enforce \eqref{bilinear} and \eqref{mccormick} as additional constraints for all $m\in\hmcE$ and $i,j\in\mcV_r$. In that case, the constraints \eqref{mccormick} are apparently redundant. However, one may simplify the problem by dropping \eqref{bilinear} to get an MILP reformulation of \eqref{eq:P4}. Interestingly, this reformulation is \emph{exact} due to the binary nature of $z$. To see this, if $z_m^*=1$ for some $m$, then \eqref{mccormick:b} and \eqref{mccormick:d} imply $y_{mij}^*=X^*_{ij}$ for all $i,j\in\mcV_r$. Otherwise, if $z_m^*=0$, then \eqref{mccormick:a} and \eqref{mccormick:c} imply $y_{mij}^*=0$ for all $i,j\in\mcV_r$. 
	
	Through the aforementioned process, problem \eqref{eq:P4} has been converted to an MILP over the variables $\{X_{ij}\}$, $\{z_m\}$, and $\{y_{mij}\}$. MILPs can be handled by state-of-the-art solvers such as Gurobi~\cite{gurobi}, and are known to scale better than MI-SDPs. Recall that the MILP reformulation of \eqref{eq:P4} requires the bounds $(\uX_{ij},\oX_{ij})$ on each $(i,j)$-th entry of $X^*$. Lacking prior information on $X^*$, one could select an arbitrarily wide range $[\uX_{ij},\oX_{ij}]$. However, this could slow down the MILP solver significantly. In the other extreme, if $X^*$ is known, that is $\uX_{ij}=\oX_{ij}$ for all $i,j\in\mcV_r$, then the binary vector $z$ can be recovered by simply solving the linear equations in \eqref{eq:P4:con}. To improve the run time of the involved MILP, we next derive tighter, non-trivial bounds on the entries of $X_{ij}^*$.

	%%%%%%%%%%%%%%%%%%%%%%%%%%%%%%%%%%%%%%%%%%%%%%%%%
	\section{Bounding the Continuous Variables}\label{sec:bounds}
	Depending on the problem structure, different bounds can be derived on $X^*_{ij}$s. This section considers two classes of topology design tasks. In the first task, some lines are already energized and the operator would like to augment a connected network by additional lines to further improve its stability. In the second task, a network topology is designed from scratch with the additional requirement of a radial grid.

	\subsection{Augmenting Existing Networks} \label{subsec:augment}
	Given the graph $\hmcG=(\mcV, \hmcE)$, this problem setup considers a pre-existing connected network described by $\mcG_e=(\mathcal{V},\mathcal{E}_e)$, and the goal is to energize additional lines from $\hmcE\setminus \mcE_e$ to improve its stability. In essence, this corresponds to the problem in \eqref{eq:P4} with the entries of $z$ corresponding to the lines in $\mcE_e$ being set to one. Then, based on \eqref{eq:L2}, the reduced Laplacian matrix of the existing network is obviously
	\begin{equation*}
		\tL_e:=\sum_{(i,j)\in\mcE_e} b_{ij} a_{ij} a_{ij}^\top.
	\end{equation*}
	
	Under this setup, the entries of $X^*$ minimizing \eqref{eq:P4} under the additional constraints $z_\ell=1$ for all $\ell\in\mcE_e$, can be bounded as follows.
	
	\begin{lemma}\label{le:augment}
		The entries of $X^*$ are bounded by
		\begin{equation}\label{eq:augment}
			0 < X^*_{ij} \leq \frac{[\tL_e^{-1}]_{ii} + [\tL_e^{-1}]_{jj}}{2},\quad \forall(i,j) \in \mcV_r.
		\end{equation}
	\end{lemma}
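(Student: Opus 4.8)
The plan is to exploit two structural facts about the optimal solution: that the optimal reduced Laplacian dominates the existing one in the positive semidefinite order, and that $X^*$ is itself positive definite. First I would recall from the discussion of \eqref{eq:P4} that at optimality the bilinear constraint \eqref{eq:P4:con} forces $\tL(z^*)$ to be invertible with $X^*=\tL^{-1}(z^*)$. Because the augmentation setup fixes $z_\ell=1$ for every $\ell\in\mcE_e$, the selected edge set contains $\mcE_e$, so by \eqref{eq:L2} we may write $\tL(z^*)=\tL_e+\sum_{m}z_m^* b_m a_m a_m^\top$, where the sum runs over the newly energized lines. Each summand is a rank-one positive semidefinite matrix, hence $\tL(z^*)\succeq \tL_e\succ 0$, both matrices being nonsingular since $\mcG_e$ and the optimal topology are connected.

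Next I would bound the diagonal entries. Since the matrix-inverse map is operator antitone on the positive definite cone, $\tL(z^*)\succeq\tL_e$ implies $X^*=\tL^{-1}(z^*)\preceq \tL_e^{-1}$. Evaluating the quadratic form $e_i^\top(\tL_e^{-1}-X^*)e_i\geq 0$ then yields $X_{ii}^*\leq [\tL_e^{-1}]_{ii}$ for every $i\in\mcV_r$.

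To handle the off-diagonal entries I would use that $X^*$ is positive definite, being the inverse of the positive definite matrix $\tL(z^*)$. Testing with the vector $e_i-e_j$ gives $(e_i-e_j)^\top X^*(e_i-e_j)=X_{ii}^*+X_{jj}^*-2X_{ij}^*\geq 0$, i.e.\ $X_{ij}^*\leq \tfrac{1}{2}(X_{ii}^*+X_{jj}^*)$; this is just the statement that the effective resistance between $i$ and $j$ is nonnegative. Chaining this with the diagonal bound from the previous step produces $X_{ij}^*\leq \tfrac{1}{2}\big([\tL_e^{-1}]_{ii}+[\tL_e^{-1}]_{jj}\big)$, the claimed upper bound, which specializes correctly to $i=j$. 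For the lower bound, $\tL(z^*)$ is a symmetric M-matrix (Stieltjes matrix), and the inverse of a nonsingular M-matrix is entrywise nonnegative; invoking irreducibility inherited from connectivity of the optimal graph upgrades this to $X_{ij}^*>0$.

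I expect the main obstacle to be the off-diagonal bound, since the positive semidefinite ordering $X^*\preceq\tL_e^{-1}$ controls only the diagonal entries directly and leaves the off-diagonal differences entangled. The off-diagonal estimate therefore has to be recovered indirectly, and the cleanest route is the two-step argument above that first bounds $X_{ij}^*$ by the average of the diagonal entries of $X^*$ and only then invokes the diagonal bound. A secondary point to get right is the direction of the operator-monotonicity inequality (inversion reverses the order), together with the care needed to justify strict positivity in the lower bound via the M-matrix/irreducibility argument rather than mere nonnegativity.
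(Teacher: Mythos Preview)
Your proposal is correct and follows essentially the same route as the paper: establish $\tL(z^*)\succeq\tL_e$ from the augmentation constraint, invert the ordering to bound the diagonal of $X^*$, use positive definiteness of $X^*$ with the vector $e_i-e_j$ to pass to the off-diagonal, and invoke the M-matrix property for the strict lower bound. If anything, your treatment is slightly more careful in invoking irreducibility to obtain strict positivity rather than mere nonnegativity.
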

	
	\begin{proof}
		Observe that $\tL_e$ can be written as $\tL(z)$ by fixing the entries of $z$ corresponding to the lines in $\mcE_e$ to $1$. Since the same entries will remain $1$ in $z^*$, it readily follows that $\tL(z^*) \succeq \tL_e\succ 0$, where $\tL_e$ is non-singular since the existing network is already connected. Then, it follows that $X^* \preceq \tL_e^{-1}$ and $v^{\top}(X^*-\tL_e^{-1})v < {0}$ for any $v\neq {0}$. Setting $v=e_i$, the diagonal entries of $X^*$ can be bounded as $X^*_{ii} \leq [\tL_e^{-1}]_{ii}$ for all $i\in\mcV_r$. Since $X^*\succ 0$ by constraint \eqref{eq:P4:con}, we have that $({e}_i-{e}_j)^{\top}X^*({e}_i-{e}_j)>0$ for all $i,j\in\mcV_r$. The latter provides $X^*_{ij}<\frac{1}{2}(X^*_{ii}+X^*_{jj})$. Upper bounding the diagonal entries with the bounds obtained earlier proves the upper bound in \eqref{eq:augment}. The lower bound in \eqref{eq:augment} can be trivially obtained since $L(z^*)$ is an M-matrix, and so its inverse has positive entries.
	\end{proof}
	
	The reduced Laplacian matrix $\tL_e$ of the existing network $\mcG_e$ is invertible as long as $\mcG_e$ is connected. If that is not the case, one could obtain bounds on $X^*_{ij}$'s by imposing a radial structure on the sought topology as discussed next.

	\subsection{Radial Topology Design}
	The setup considered here designs a network afresh, but under the requirement that it is radial. The analysis simplifies under the following mild assumption.
	
	\begin{assumption} \label{as:A2}
		There exists a node in $\mcV$ that is incident to exactly one edge.
	\end{assumption}
	
	To derive bounds on the entries of $X^*$ minimizing \eqref{eq:P4} for the special case of $K=N$ (radial network), let us first construct the graph $\mcG_r=(\mcV, \hmcE_r)$, where $\hmcE_r$ consists of the edges in $\hmcE$, but with inverse weights $x_{ij}:=b_{ij}^{-1}$ for all $(i,j)\in\hmcE$. Based on $\mcG_r$, we define some additional properties that will be useful later. If one of the nodes in $\mcV$ satisfying Assumption~\ref{as:A2} is selected as the reference node, then the weight (inverse susceptance) of its incident line is denoted by $x_0$. Let us also define the minimum of the inverse line susceptances
	\[x_{\min}:=\min_{(i,j)\in\hmcE_r} x_{ij}.\]
	
	Before solving \eqref{eq:P4}, we find the maximum spanning tree of graph $\mcG_r$, and denote the sum of its edge weights by $f$. The maximum spanning tree can be found efficiently by finding the minimum spanning tree on $\mcG_r$ upon negating its edge weights \cite{cormen}. Lastly, for each node $i\in\mcV_r$, we find its shortest path to the reference node $0$ in $\mcG_r$. The sum of edge weights along this shortest path will be denoted by $h_i$. 
	
	\begin{lemma}\label{UBcor1}
		Under Assumption~\ref{as:A2}, the entries of $X^*$, minimizing \eqref{eq:P4} for $K=N$, are bounded as
		\begin{subequations}\label{eq:radial}
			\begin{align}
				&h_i \leq X^*_{ii} \leq f,\quad \forall i \in \mcV_r \label{eq:radial:d}\\
				&x_0 \leq X^*_{ij} \leq f-x_{\min},\quad \forall i,j \in \mcV_r.\label{eq:radial:od}
			\end{align}
		\end{subequations}
	\end{lemma}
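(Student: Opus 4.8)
The plan is to exploit the fact that, for $K=N$, the optimizer $z^*$ of \eqref{eq:P4} selects exactly $N$ edges forming a spanning tree $T$ of $\mcG_r$ (a connected graph on $N+1$ nodes carrying at most $N$ edges is necessarily a tree), and that $X^*=\tL^{-1}(z^*)$ as already noted after \eqref{eq:P4:con}. Consequently every entry of $X^*$ is an entry of the inverse reduced Laplacian of a weighted tree, and I would reduce the bounds in \eqref{eq:radial} to purely combinatorial statements about $T$. For each $i\in\mcV_r$ let $P_i$ denote the edge set of the unique path from $i$ to the reference node $0$ in $T$.

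The crux will be a closed-form expression for the entries of $\tL^{-1}(z^*)$, namely
\[ X^*_{ij}=[\tL^{-1}(z^*)]_{ij}=\sum_{e\in P_i\cap P_j} x_e,\qquad i,j\in\mcV_r. \]
I would establish this through the electrical-network reading of $\tL$: treating the $b_{ij}$ as conductances (so the $x_{ij}=b_{ij}^{-1}$ are resistances) and grounding node $0$, the vector $\phi=\tL^{-1}(z^*)e_j$ is the nodal potential produced by injecting a unit current at $j$ and extracting it at $0$. In a tree this current flows only along $P_j$, carrying unit magnitude on each of its edges, so the voltage drop across edge $e$ equals $x_e$ if $e\in P_j$ and $0$ otherwise. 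Accumulating these drops from $0$ to $i$ along $P_i$ yields $\phi_i=\sum_{e\in P_i\cap P_j}x_e$, which is the claimed identity. Equivalently, one may verify $\tL^{-1}(z^*)=P\,\diag(\{x_e\})\,P^{\top}$, where $P$ is the node--edge path incidence matrix of $T$, using that the reduced incidence matrix of a tree is invertible with inverse $P^{\top}$. I expect this identity to be the main obstacle; once it is in hand, everything else is elementary.

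Given the formula, I would read off the diagonal bounds \eqref{eq:radial:d} by setting $j=i$, giving $X^*_{ii}=\sum_{e\in P_i}x_e$, the $x$-weight of the root-path of $i$ in $T$. Since $T\subseteq\mcG_r$, this is a path from $i$ to $0$ in $\mcG_r$, so its weight is at least the shortest-path weight $h_i$, whence $h_i\le X^*_{ii}$. For the upper bound, $P_i$ is a subset of the edges of $T$ and all $x_e>0$, so $X^*_{ii}\le\sum_{e\in T}x_e$; as $f$ is the weight of the \emph{maximum} spanning tree of $\mcG_r$, we have $\sum_{e\in T}x_e\le f$, and thus $X^*_{ii}\le f$.

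Finally I would treat the off-diagonal bounds \eqref{eq:radial:od} via the common segment $P_i\cap P_j$. By Assumption~\ref{as:A2} the reference $0$ has a single incident edge in $\hmcE$, of weight $x_0$; this edge lies in every spanning tree of $\mcG_r$, hence in $T$, and since any path to $0$ must traverse it, it belongs to $P_i\cap P_j$ for all $i,j$. Positivity of the weights then gives $X^*_{ij}\ge x_0$. For the upper bound, $i\neq j$ forces $P_i\cap P_j$ to be a \emph{proper} subset of the edges of $T$: the root-path of whichever of $i,j$ is not an ancestor of the other must split off through some edge of $T$ lying outside $P_i\cap P_j$. That omitted edge has weight at least $x_{\min}$, so $X^*_{ij}\le\sum_{e\in T}x_e-x_{\min}\le f-x_{\min}$, which completes the argument.
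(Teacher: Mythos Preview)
Your proof is correct and follows essentially the same approach as the paper: both rest on the identity $[\tL^{-1}]_{ij}=\sum_{e\in P_i\cap P_j}x_e$ for trees, and then deduce the four bounds by the same combinatorial reasoning about root-paths, shortest paths, and the maximum spanning tree. The only difference is that the paper invokes this identity by citation, whereas you supply an argument for it via the electrical-network interpretation (equivalently, the factorization $\tL^{-1}=P\,\diag(\{x_e\})\,P^{\top}$); your treatment of the off-diagonal upper bound is also slightly more explicit than the paper's.
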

	
	\begin{proof}
		The bounds rely on a fundamental property of the inverse Laplacian matrix of a radial network: If $\tL$ is the reduced susceptance Laplacian of a radial network and $X:=\tL^{-1}$, then the entry $X_{ij}$ equals the sum of the inverse susceptances that are common to the paths of nodes $i$ and $j$ to the reference bus; see \cite[Lemma~1]{dekasmartgridcomm}. Under Assumption~\ref{as:A2}, the common path of any pair of nodes $(i,j) \in \mcV_r$ must include at least the line incident to the reference bus, and hence, $X^*_{ij}\geq x_0$. By the definition of shortest path, the entry $X^*_{ii}$ is lower bounded by $h_i$ for all $i\in\mcV_r$, thus providing the lower bound in~\eqref{eq:radial:d}.
		
		%From the same argument of~\cite{dekatcns}, the entry $X^*_{ii}$ equals the sum of the inverse susceptances on the path from node $i$ to the reference bus. This implies that
		%\begin{equation}\label{eq:dod}
		%X^*_{ii} \geq X^*_{ij},\quad \forall i,j\in\mcV_r.
		%\end{equation}
		
		Regarding the upper bound in \eqref{eq:radial:d}, recall that the $(i,i)$-th entry of $X^*$ equals the sum of weights on the path from $i$ to the reference node $0$. The sum of weights on the longest such path is still upper bounded by the sum weight $f$ of all edges in the maximum spanning tree. Because the entry $X^*_{ij}$ for $i \neq j$ must have at least one less edge than the longest path, the upper bound in \eqref{eq:radial:od} holds as well. 
	\end{proof}
	Note that the upper bounds can be tight in the setting where the maximum spanning and optimum trees are the same line graph.

	\subsection{Model Simplification and Bound Tightening} \label{subsec:boundt}
	Exploiting the structure of $\hmcG=(\mcV,\hmcE)$ can provide additional information on the bounds of $X^*$ matrix entries to accelerate solving \eqref{eq:P4} for the special case of $K=N$ (radial grid). For example, if $\hmcG$ gets disconnected upon removing edge $\ell\in\hmcE$, this edge $\ell$ belongs to the sought tree topology and $z_\ell^*=1$ before solving \eqref{eq:P4}. To identify such edges, we resort to the notion of a \emph{graph cutset} $\mcC\subset\hmcE$, defined as a subset of edges that once removed, splits the graph $\hmcG$ into two or more  connected components. The edges in this cutset will be termed as \emph{critical edges}. Now, we present a simple algorithm to enumerate all the single critical edges $(|\mathcal{C}| = 1)$ by initializing the graph $\hmcG$'s weights to unit values. 
	
	% A critical edge can be found by running the minimum-cut algorithm on $\hmcG$, but with unit edge weights rather than $b_{ij}$'s. This algorithm finds a single critical edge. To recover all critical edges, one can run the steps:
	\begin{enumerate}[Step 1:]
		\item Solve the min-cut problem on $\hmcG$ with unit edge weights by using the standard max-flow min-cut algorithm ~\cite{Ford}.
		\item Increase the weight for the edge labelled as critical to $1+\epsilon$ for some $\epsilon>0$.
		\item If $|\mathcal{C}|>1$, quit; else, go to Step 1.
	\end{enumerate}
	The second step ensures that every time we are identifying a new critical edge. Upon completing this process, the entries of $z^*$ corresponding to the critical edges can be safely set to $1$. This process not only reduces the binary search for $z^*$ in \eqref{eq:P4}, but it further tightens the lower bounds on certain $X^*_{ij}$'s as discussed in Lemma \ref{UBcor2}; see also Fig.~\ref{fig:mincut}. 
	
	\begin{figure}[t]
		\centering
		\subfigure[]{
			\includegraphics[scale=0.66]{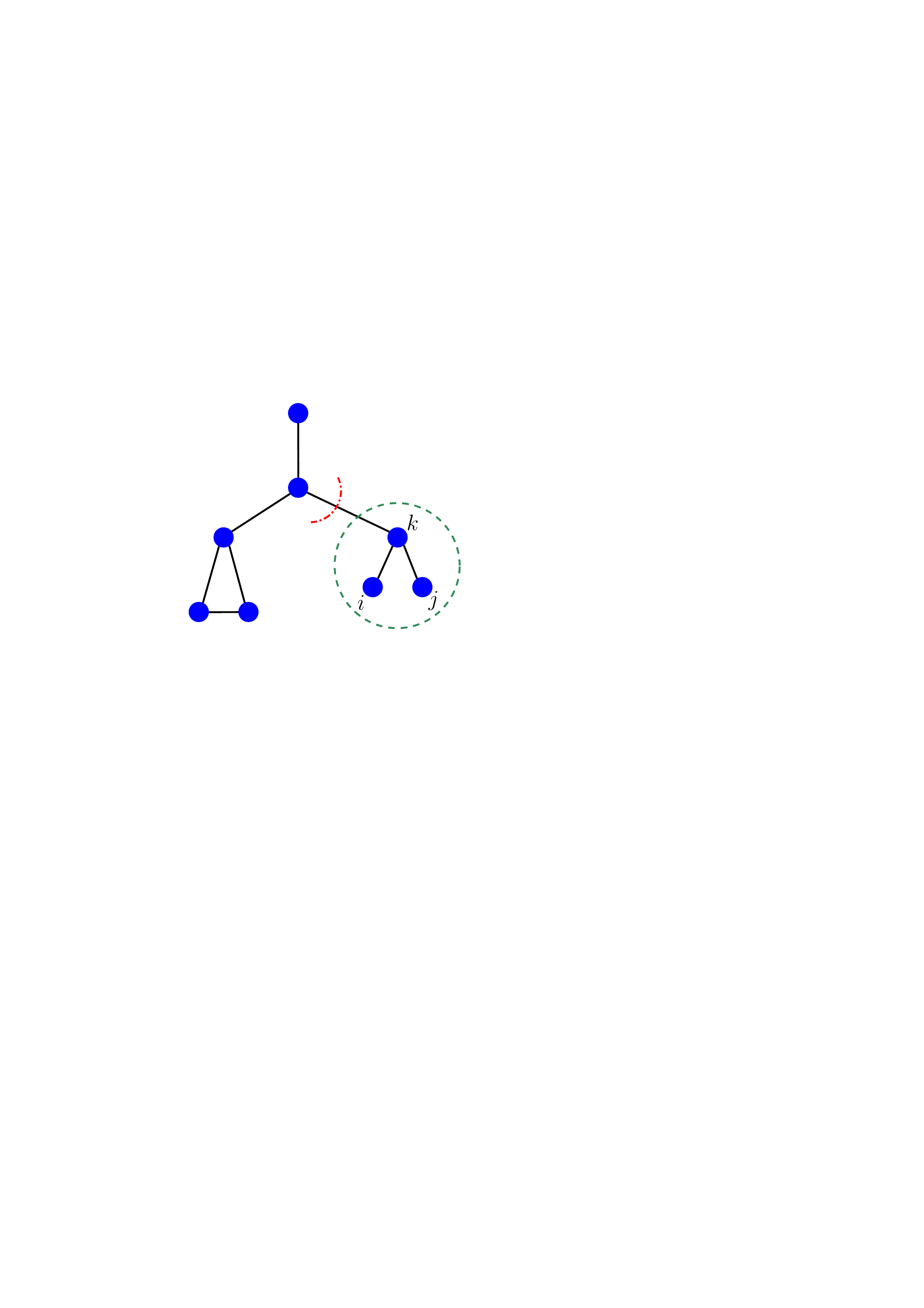}}       
		\subfigure[]{
			\includegraphics[scale=0.66]{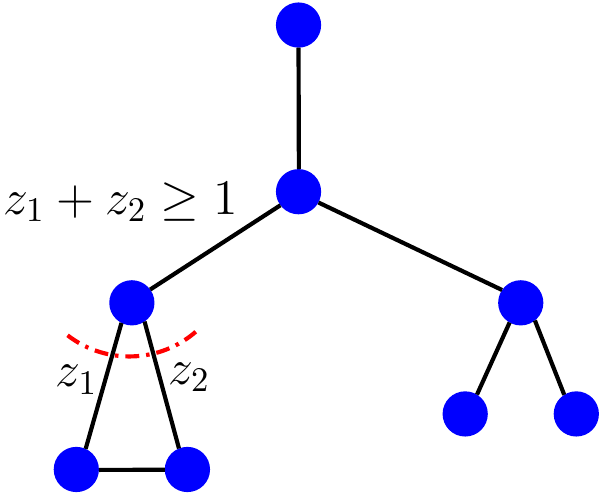}} 
		\caption{Grid graph ${\hmcG}=(\mcV,\hmcE)$ with candidate location of edges. \emph{Left panel:} The lower bounds on $X^*_{ki}$, $X^*_{kj}$, and $X^*_{ij}$ can be tightened using the shortest path weight $h_k$ due to the critical edge colored in red. \emph{Right panel:} The size--$2$ cutset shown in red implies the constraint $z_1+z_2 \geq 1$.}
		\label{fig:mincut}
	\end{figure}
	
	\begin{lemma}\label{UBcor2}
		Suppose a critical edge $\ell=(i,j) \in \hmcE$ partitions the nodes of $\hmcG$ into two disjoint connected components $\mcV_\ell$ and its complement $\bar{\mcV}_\ell$. If $\mcV_\ell$ contains node $i$ as well as the reference bus, then \eqref{eq:radial:od} can be tightened as
		\[h_j \leq X^*_{kj},\quad \forall k \in \bar{\mcV}_\ell.\]
	\end{lemma}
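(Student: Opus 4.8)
The plan is to reuse the structural characterization of the inverse reduced Laplacian already exploited in the proof of Lemma~\ref{UBcor1}: for the optimal radial network with $X^*=\tL^{-1}(z^*)$, the entry $X^*_{kj}$ equals the total inverse susceptance (edge weight in $\mcG_r$) accumulated on the portion of the tree shared by the unique paths from $k$ and from $j$ to the reference bus~$0$; see \cite[Lemma~1]{dekasmartgridcomm}. The whole task then reduces to identifying this shared portion explicitly when $\ell=(i,j)$ is a critical edge.

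First I would observe that, since $\ell$ is a critical edge of $\hmcG$, its removal disconnects the graph; hence every spanning connected subgraph — in particular the optimal radial topology for $K=N$ — must contain $\ell$, so $z_\ell^*=1$ and $(i,j)$ is the \emph{unique} edge joining $\mcV_\ell$ and $\bar{\mcV}_\ell$ in the optimal tree. Because the reference bus lies in $\mcV_\ell$ while $j\in\bar{\mcV}_\ell$, the unique tree path from $j$ to $0$ must traverse $\ell$, entering $\mcV_\ell$ through $i$; I would write this path as the edge $(j,i)$ followed by a sub-path $P_{i\to 0}$ lying entirely inside $\mcV_\ell$.

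Next I would argue that for any $k\in\bar{\mcV}_\ell$ the unique tree path from $k$ to $0$ must also cross the bridge, and since $j$ is the only endpoint of $\ell$ inside $\bar{\mcV}_\ell$, this path necessarily reaches $j$ and then follows $(j,i)$ and $P_{i\to 0}$ to the reference. Consequently the portion common to the $k$-to-$0$ and $j$-to-$0$ paths is exactly $(j,i)\cup P_{i\to 0}$, i.e.\ the entire tree path from $j$ to $0$. By the characterization above, $X^*_{kj}$ therefore equals the total weight of the tree path from $j$ to $0$.

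Finally, since the optimal tree is a subgraph of $\mcG_r$, this $j$-to-$0$ path is itself a valid path in $\mcG_r$, so its weight is at least the shortest-path weight $h_j$; this yields $h_j\leq X^*_{kj}$ for all $k\in\bar{\mcV}_\ell$, as claimed, and is a genuine tightening of \eqref{eq:radial:od} because $h_j\geq x_0$. I expect the only delicate point to be the combinatorial claim that every tree path from $\bar{\mcV}_\ell$ to the reference passes through $j$ before crossing to $i$; once the uniqueness of the bridge in the tree and the placement of $0$ in $\mcV_\ell$ are pinned down, the shortest-path lower bound is immediate.
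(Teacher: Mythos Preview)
Your argument is correct and follows the same line as the paper's proof: both use that $\ell$ is the unique edge linking $\bar{\mcV}_\ell$ to the reference side, so the tree path from any $k\in\bar{\mcV}_\ell$ to $0$ passes through $j$, making the $j$-to-$0$ path part of the common segment and hence $X^*_{kj}\geq h_j$. You spell out more of the combinatorics (that $z_\ell^*=1$, the decomposition of the paths, and the equality of the common segment with the full $j$-to-$0$ path), whereas the paper compresses this into two sentences, but the underlying idea is identical.
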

	
	\begin{proof}
		The edge $(i,j)\in \hmcE$ is the only edge that connects the nodes in $\bar{\mcV}_\ell$ to the rest of the network. Hence, the common path to the reference bus of any two nodes in $\bar{\mcV}_\ell$ must include the path of node $j$ to the reference. It follows that the shortest path weight $h_j$ is a valid lower bound for all nodes in $\bar{\mcV}_\ell$.
	\end{proof}
	
	Identifying cutsets of cardinality larger than $1$ offers additional information to tighten the bounds of entries of the $X$ matrix. If graph $\hmcG$ gets disconnected upon removing lines $\ell_1,\ell_2\in\hmcE$, then at least one of these lines should be active. This logical conclusion translates to the constraint $z_{\ell_1}+z_{\ell_2}\geq 1$, which can be augmented to \eqref{eq:P4} to tighten the McCormick reformulation and possibly accelerate the MILP solver. Cutsets of larger cardinality, say $|\mathcal{C}| = k, k>1$, can be identified by iterating Steps 1 through 3 of the algorithm described earlier. In this case, we assign the weights of $k+\epsilon$ on the critical edges, where $\epsilon > 0$.  
	
	\section{Numerical Tests} \label{sec:nx}
	All tests were run on a 2.7 GHz Intel Core i5 laptop with 8GB RAM. The MILP formulations were solved using Gurobi v8.0.1 optimizer, written in Julia/JuMP ~\cite{gurobi, jump}.
	
	The performance of the MILP in \eqref{eq:P4} was tested for augmenting an existing network as well as for designing a radial one afresh. For the augmentation setup, the IEEE $39$-bus system benchmark was used as the pre-existing connected network~\cite{39bus}. The set $\hmcE$ was selected by $10$ randomly picked additional lines. From these lines, we solved the restricted version of \eqref{eq:P4} for $K =\{2,3, 4,5\}$. To satisfy Assumption~\ref{as:A1}, we assumed $M_i=10^{-4}$ on all buses that did not host generators, and $D_i=d=0.025$ for all $i \in \cal V$. To grade an arbitrarily constructed network, we compared its squared ${\cal H}_2$ norm to that of the optimal network of the same edge cardinality. The results are summarized in Table~\ref{table_1}.  Additional lines are useful in minimizing disturbances, and so the budget constraint in \eqref{eq:P4} was always met with equality. For all cases, the augmentation design problem was solved in less than 5 seconds. 
	
	\begin{table}[t]
		\centering
		\caption{Cost of designed grids using the IEEE $39$-bus network.}
		% \begin{tabular}{p{.83cm} p{1.39cm} p{1.39cm} p{1.49cm} p{1.49cm}}
		\begin{tabular}{lccc}
			\toprule
			% \# of edges & Rooted Tree + GA (\%) & Rooted Tree + OA (\%) & Optimal Tree + GA (\%) & Optimal Tree + OA (\%)\\
			\# of lines & Optimal Topology & Suboptimal Topology \\
			\cmidrule{1-3}
			$2$ &   $0.570$  & $0.585$  \\
			$3$  &  $0.564$  &$0.582$   \\
			$4$ &  $0.557$  &$0.576$     \\
			$5$ &  $0.552$ &$0.570$   \\
			\bottomrule
		\end{tabular}
		\label{table_1}
	\end{table}
	
	We next considered the radial topology design problem with $\hmcE$ composed of all edges in the IEEE $39$-bus network. The optimal cost obtained after solving the MILP in this case was $1.669$, and the time required to find the optimal tree was close to $2$ hours. Considering that the problem needs to be solved once off-line, this running time may not be of concern. Figure~\ref{fig:IEEE39} shows the optimal radial topology that was identified. 
	
	\begin{figure*}[t]
		\centering
		\includegraphics[scale=0.45]{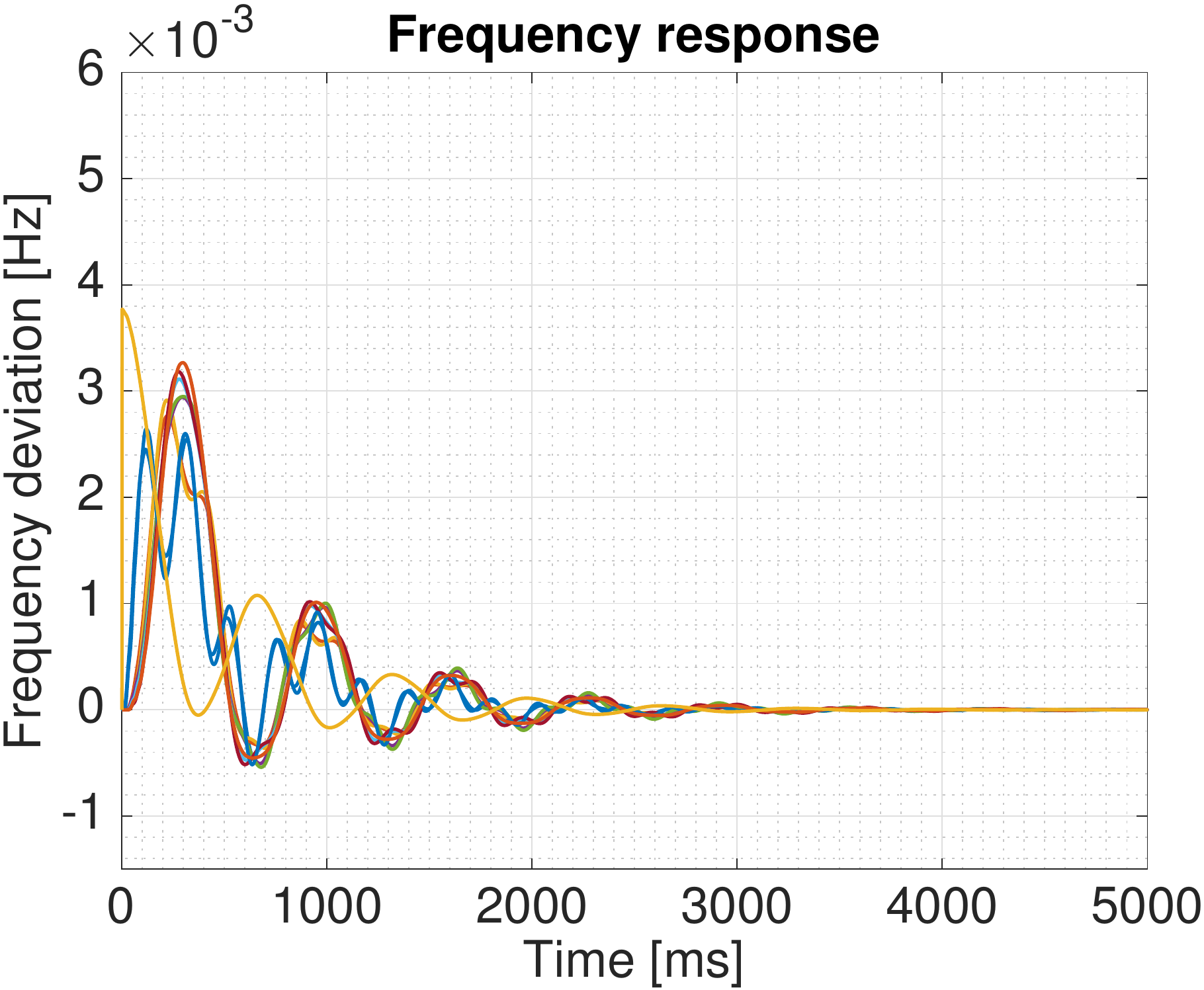}\hspace*{1em}
		\includegraphics[scale=0.45]{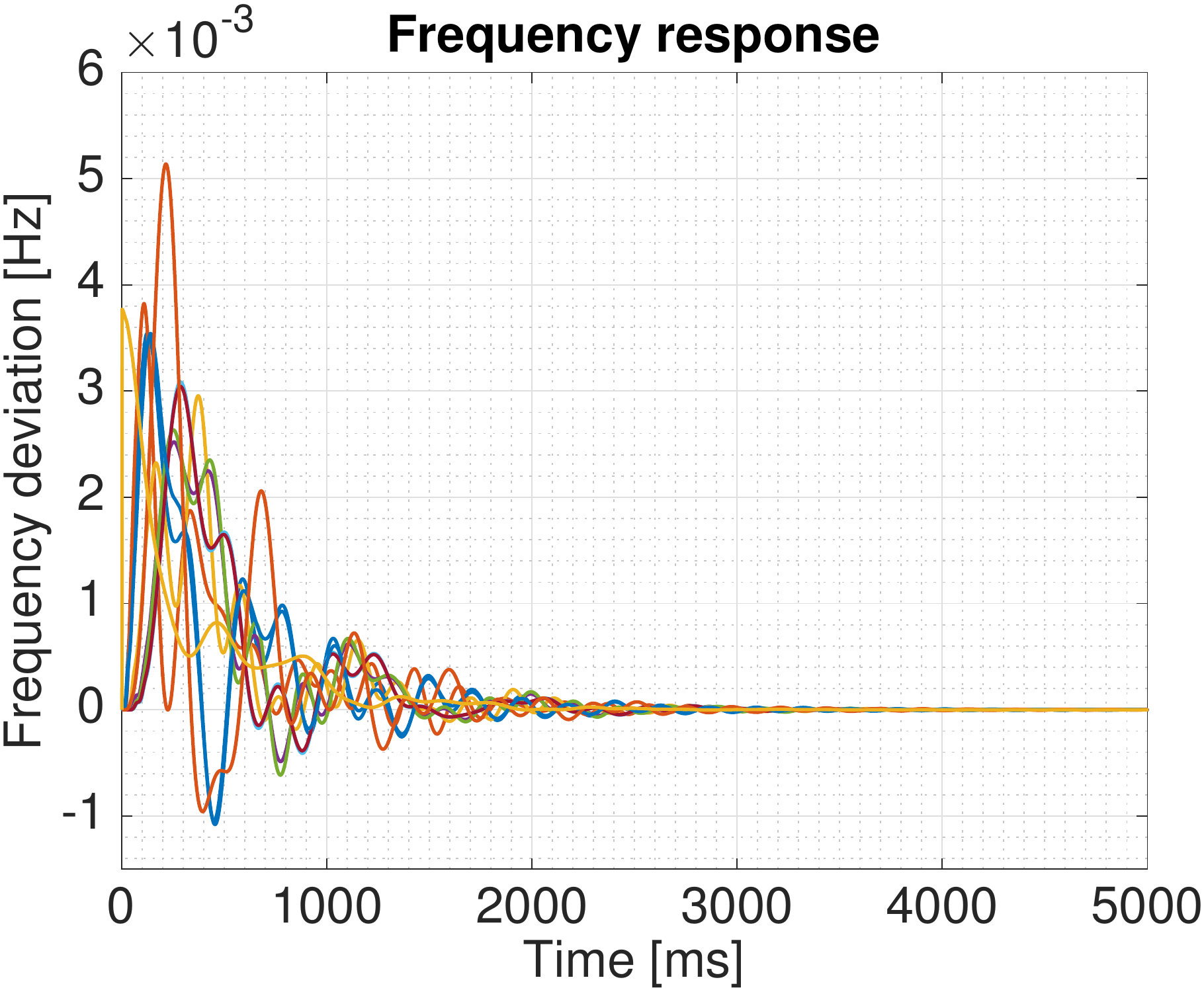}
		\caption{Frequency response at generator buses $30$--$39$ for a impulse input on bus 39: (a) Optimal tree (left panel); (b) Sub-optimal tree (right panel).}
		\label{fig:freqres}
	\end{figure*}
	
	\begin{figure}[t]
		\centering
		\includegraphics[scale=0.8]{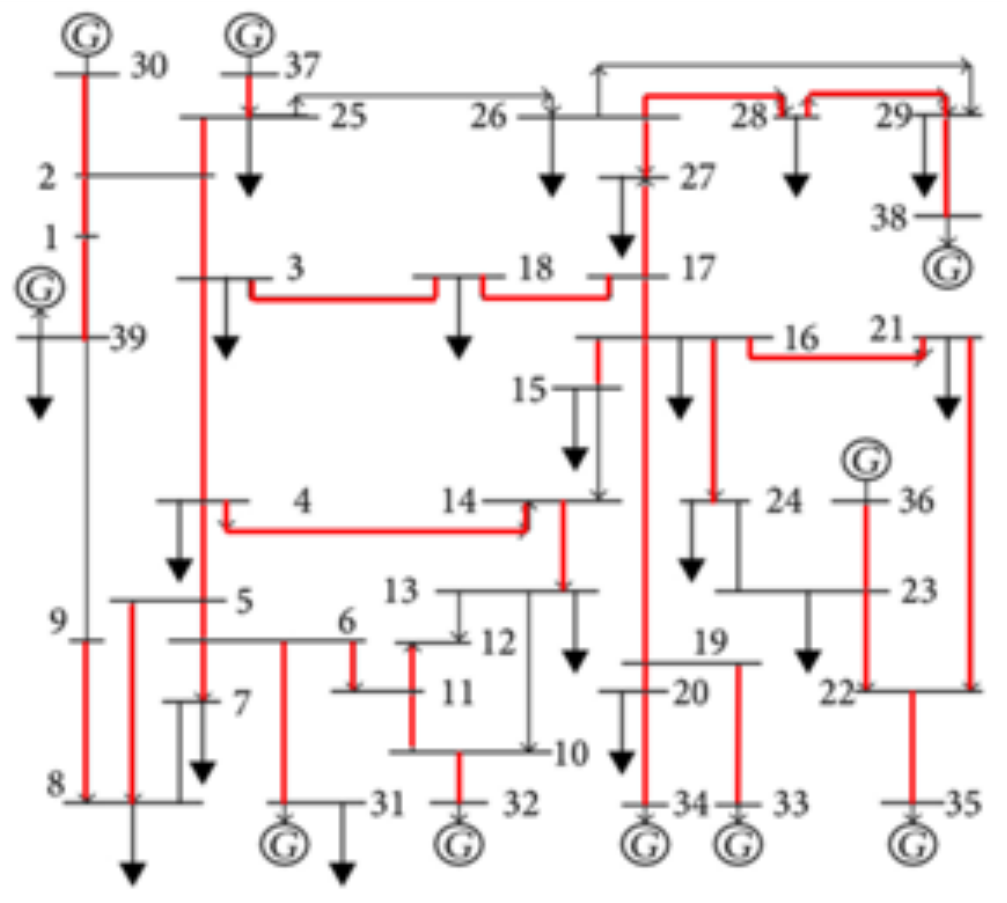}
		\caption{Optimal radial topology (bold red lines) considering all possible edges of the IEEE $39$-bus feeder \cite{39bus}.}
		\label{fig:IEEE39}
	\end{figure}
	
	Instead of using the bounds of Lemma~\ref{UBcor1} and the bound tightening procedure of Section~\ref{subsec:boundt}, we attempted to solve the same MILP with the relatively looser bounds of $0 \leq X^*_{ij} \leq 10$ for all $(i,j) \in \cal V$. In this case, the solver reached the optimality gap of 60\% after running for $3$ hours. Clearly, having tighter bounds improves the computation time. 
	
	For a 1 per unit impulse input at bus $39$, Figure~\ref{fig:freqres} compares the frequency response of the optimal tree (left panel) with an arbitrarily selected suboptimal tree (right panel). Notice that not only is the amplitude of oscillations lower in the left panel, but the generators also seem to drift together. These observations indicate that the optimal tree selected is much more effective at minimizing the effect of small disturbances.  
	
	\section{Conclusions and Future Work}
	\label{sec:conc}
	We have presented a general framework to study the effect of network topology on power grid dynamics. Using a system-theoretic approach, we have shown that the original topology design problem (MI-SDP) can be reformulated in tractable form as an MILP. To improve the computation time, graph-theoretic properties have been exploited to simplify the model and provide tighter bounds on the continuous optimization variables involved. Numerical tests on the IEEE 39-bus benchmark suggest that meshed networks exhibit improved coherence behavior. Current research efforts are focused on tackling the topology design task with non-uniform damping, exploring conditions for submodularity, and considering the design problem from an $\mathcal{H}_{\infty}$-norm perspective.
	
	%\section{Acknowledgement}The work was supported by funding from the Advanced Grid Modeling Program in the Office of Electricity in U.S. Department of Energy.
	%==============================;
	%  Include all the references  ;
	%==============================;
	\balance
	\bibliographystyle{IEEEtran}
	\bibliography{myabrv,references}
	
\end{document}